\documentclass[12pt]{amsart}

\textwidth15.6cm

\topmargin2.5cm

\oddsidemargin0.1cm

\evensidemargin0.1cm


\usepackage[usenames]{color}
\usepackage{fullpage,url,mathrsfs,stmaryrd}

\usepackage{relsize}
\usepackage[bbgreekl]{mathbbol}
\usepackage{amsfonts}

\DeclareSymbolFontAlphabet{\mathbb}{AMSb}
\DeclareSymbolFontAlphabet{\mathbbl}{bbold}

\newcommand{\BB}{{\mathbb{B}}}
\newcommand{\BG}{{\mathbb{G}}}

\usepackage{amssymb}
\usepackage[all]{xy}
\usepackage{mathrsfs}
\usepackage{enumerate}
\usepackage{amscd}

\usepackage{bbm}

\DeclareMathOperator{\Syn}{{Syn}}

\newcommand{\AAut}{\underline{\on{Aut}}}

\newcommand{\MMor}{\underline{\on{Mor}}}

\DeclareMathOperator{\Ob}{{Ob}}
\DeclareMathOperator{\Ab}{{Ab}}

\newcommand{\SR}{{{}^s \sR}}

\newcommand{\cA}{{\mathcal A}}

\newcommand{\cC}{{\mathcal C}}

\newcommand{\sG}{{\mathscr G}}

\newcommand{\sR}{{\mathscr R}}

\newcommand{\sX}{{\mathscr X}}

\newcommand{\fB}{{\mathfrak B}}

\newcommand{\fG}{{\mathfrak G}}

\newcommand{\fL}{{\mathfrak L}}

\newcommand{\nc}{\newcommand}

\nc\wh{\widehat}

\nc\on{\operatorname}

\nc\Gr{\on{Gr}}

\nc\Fl{\on{Fl}}

\newtheorem{cor}[subsubsection]{Corollary}
\newtheorem{lem}[subsubsection]{Lemma}
\newtheorem{prop}[subsubsection]{Proposition}

\theoremstyle{remark}

\newcommand{\BF}{{\mathbb{F}}}
\newcommand{\BN}{{\mathbb{N}}}

\newcommand{\BZ}{{\mathbb{Z}}}

\DeclareMathOperator{\Isom}{{Isom}}
\DeclareMathOperator{\Lie}{{Lie}}

 \DeclareMathOperator{\Spf}{{Spf}}

\DeclareMathOperator{\Cone}{{Cone}}

\newcommand{\limto}{{\displaystyle\lim_{\longrightarrow}}}
\newcommand{\rightlim}{\mathop{\limto}}


\newcommand{\leftlim}{\mathop{\displaystyle\lim_{\longleftarrow}}}
\newcommand{\limfromn}{\leftlim\limits_{\raise3pt\hbox{$n$}}}
\newcommand{\limton}{\rightlim\limits_{\raise3pt\hbox{$n$}}}


\newcommand{\rightlimit}[1]{\mathop{\lim\limits_{\longrightarrow}}\limits%
                    _{\raise3pt\hbox{$\scriptstyle #1$}}}

\newcommand{\leftlimit}[1]{\mathop{\lim\limits_{\longleftarrow}}\limits%
                    _{\raise3pt\hbox{$\scriptstyle #1$}}}

\newcommand{\epi}{\twoheadrightarrow}
\newcommand{\iso}{\buildrel{\sim}\over{\longrightarrow}}

\DeclareMathOperator{\Aut}{{Aut}}
\DeclareMathOperator{\BT}{{BT}}
\DeclareMathOperator{\Coker}{{Coker}}

\DeclareMathOperator{\Ker}{{Ker}} \DeclareMathOperator{\id}{{id}}
\DeclareMathOperator{\im}{{Im}} 

\DeclareMathOperator{\Mor}{{Mor}}
 \DeclareMathOperator{\op}{{op}}

\DeclareMathOperator{\Spec}{{Spec}}

\DeclareMathOperator{\Stab}{{Stab}}
\DeclareMathOperator{\pNilp}{{p-Nilp}}

\theoremstyle{definition}

\numberwithin{equation}{section}


\begin{document}
\title{On the quotient of a groupoid by an action of a 2-group}
\author{Vladimir Drinfeld}
\address{University of Chicago, Department of Mathematics, Chicago, IL 60637}
 \thanks{Partially supported by NSF grant   DMS-2001425.}
 \dedicatory{To G\'erard Laumon with deepest admiration}


\begin{abstract}
If $\sX$ is a groupoid equipped with an action of a 2-group $\sG$ then one has a 2-groupoid $\sX/\sG$. We describe the fibers of the functor from $\sX/\sG$ to the 1-grou\-po\-id $\pi_0(\sX )/\pi_0(\sG)$.
We also give an explicit model for $\sX/\sG$ in a certain situation. 

The work gives an abstract model for a certain 2-stack which provides a conjectural description of the $p$-adic completion of the stack of $n$-truncated Barsotti-Tate groups.
\end{abstract}
\subjclass[2020]{18N10}

\maketitle

If a 2-group $\sG$ acts on a groupoid $\sX$ then one can form the quotient 2-groupoid $\sX/\sG$. This general construction is discussed in \S\ref{s:intro}.
In \S\ref{s:the situation} we discuss the following special situation: $\sX$ is the underlying groupoid of a 2-group, and $\sG$ acts on $\sX$ by two-sided translations.

This elementary paper is motivated by the following: the sheafified version of the situation of \S\ref{s:the situation} occurs in the definition of the 2-stack from \cite[\S D.8.3]{D}, which provides a conjectural description of the $p$-adic completion of the stack of $n$-truncated Barsotti-Tate groups and its ``Shimurian'' analogs (see Appendix~\ref{s:Relation to BT} for more details). The result of \S\ref{ss:Describing the 2-groupoid} of this paper could be used to test Conjecture~D.8.4 from \cite{D}.

I first constructed the 2-groupoid from \S\ref{ss:Describing the 2-groupoid} by trial and error (I was motivated by potential applications to the Lau group scheme and the Lau gerbe, see \S\ref{sss:relation to Lau}). Then the idea of treating this explicit construction as a special case of the 2-groupoid 
$\sX/\sG$ was suggested to me by D.~Arinkin and N.~Rozenblyum. I am very grateful to them.

\section{The quotient 2-groupoid}  \label{s:intro}
\subsection{The question}
\subsubsection{2-groupoids and 2-groups}   
Recall that a 2-groupoid is a 2-category in which all 1-morphisms and 2-morphisms are invertible.
A \emph{$2$-group} is a 2-groupoid with a single object; equivalently, a 2-groupoid is a monoidal category in which all objects and morphisms are invertible.

\subsubsection{Quotient groupoids}   \label{sss:Quotient groupoids} 
If a group $G$ acts on a set $X$ then the \emph{quotient groupoid} (or groupoid quotient) $X/G$ is defined as follows: the set of objects is $X$, a morphism $x\to x'$ is an element $g\in G$ such that $gx=x'$, and the composition of morphisms is given by multiplication in $G$.

\subsubsection{Quotient 2-groupoids}   \label{sss:Quotient 2-groupoids} 
More generally, if a 2-group $\sG$ acts on a groupoid $\sX$ then one defines the  \emph{quotient 2-groupoid} $\tilde\sX=\sX/\sG$ as follows: 

(i) $\Ob\tilde\sX:=\Ob\sX$;

(ii) for $x_1,x_2\in\sX$ let $\MMor_{\tilde\sX} (x_1,x_2)$ be the following groupoid: its objects are pairs 
$$(g,f), \mbox{ where }g\in\sG,\quad f\in\Isom (x_2,gx_1),$$
and a morphism $(g,f)\to (g',f')$ is a morphism $g\to g'$ such that the corresponding morphism $gx_1\to g'x_1$ equals $f'f^{-1}$; 

(iii) the composition functor $\MMor_{\tilde\sX} (x_1,x_2)\times \MMor_{\tilde\sX} (x_2,x_3)\to \MMor_{\tilde\sX} (x_1,x_3)$ comes from the product in $\sG$.

\subsubsection{The question}   \label{sss:the question}
In the situation of \S\ref{sss:Quotient 2-groupoids}, 
let $\sX':=\pi_0(\sX)/\pi_0(\sG)$, 
where $\pi_0$ stands for the set of isomorphism classes of objects and $\pi_0(\sX)/\pi_0(\sG)$ is the quotient 1-groupoid in the sense of \S\ref{sss:Quotient groupoids}.
We have a canonical functor
\begin{equation}  \label{e:the functor}
\tilde\sX=\sX/\sG\to \sX'.
\end{equation}

It is easy to see that the functor \eqref{e:the functor} induces a surjection at the level of objects and at the level of 1-morphisms. 
So the obstruction to it being an equivalence is formed by the 2-groups
\begin{equation}  \label{e:the 2-group}
\Ker (\AAut_{\tilde\sX}x\to\Aut_{\sX'}\bar x), \quad x\in\sX,
\end{equation}
where $\bar x\in\pi_0(\sX)$ is the image of $x$.
Let us note that $\Aut_{\sX'}\bar x$ is just the stabilizer of $\bar x$ in~$\pi_0 (\sG )$; similarly, $\AAut_{\tilde\sX}x$ is the ``categorical stabilizer'' of $x$ in $\sG$.

The problem is to describe the 2-group \eqref{e:the 2-group}. This
will be done in Proposition~\ref{p:the answer} in terms of a certain crossed module.

\subsection{Recollections on crossed modules} \label{ss: recollections and notation}
\subsubsection{Crossed modules}      \label{sss:crossed modules} 
Recall that a {\em crossed module} $G^{\bullet}$ is a pair of groups $G^0,G^{-1}$ 
together with an action of $G^0$ on $G^{-1}$  
and a homomorphism $d: G^{-1}\to G^0$ satisfying certain identities. The image of $\gamma\in G^{-1}$ under the action of $g\in G^0$ is denoted by ${}^g \gamma$, and the identities are as follows:
\begin{equation} \label{e:crossed module1}
d({}^g \gamma) = g d(\gamma) g^{-1}, \quad \gamma\in G^{-1}, g\in G,
\end{equation}
\begin{equation}  \label{e:crossed module2}
{}^{d(\gamma)} \gamma' = \gamma\gamma '\gamma ^{-1}\quad  \gamma ,\gamma '\in G^{-1}.
\end{equation}

\subsubsection{The 2-group corresponding to a crossed module}  \label{sss:The 2-group} 
A crossed module $G^{\bullet}$ gives rise to a strict 2-group, which we denote by 
\begin{equation}
\Cone (G^{-1}\overset{d}\longrightarrow G^0).
\end{equation} 
As a groupoid, this is the quotient of $G^0$ by the action of $G^{-1}$ given by
$(\gamma ,g)\mapsto d(\gamma )\cdot g$; thus the set of objects is $G^0$, and for $g,g'\in G^0$ one has
\begin{equation}   \label{e:Mor(g,g')}
\Mor (g,g')=\{\gamma\in G^{-1}\,|\, d(\gamma )g=g'\}.
\end{equation}
The tensor product map $\Mor (g_1,g'_1)\times \Mor (g_2,g'_2)\to \Mor (g_1g_2,g'_1g'_2)$ is given by
\begin{equation}  \label{e:tensor product}
 (\gamma_1,\gamma_2)\mapsto \gamma_1\cdot {}^{g_1}\gamma_2 .
\end{equation}
The assignment $G^{\bullet}\mapsto\Cone (G^{-1}\overset{d}\longrightarrow G^0)$ gives an equivalence between the category of crossed modules and that of strict 2-groups (e.g., see \cite[Lemma~2.2]{L}).

\subsubsection{The abelian case}  \label{sss:abelian case} 
We say that a crossed module $G^{\bullet}$ is \emph{abelian} if $G^0,G^{-1}$ are abelian and the action of $G^0$ on $G^{-1}$ is trivial. In this case
the 2-group from \S\ref{sss:The 2-group} is the one associated in \cite[Expos\'e XVIII, \S 1.4]{SGA4} to the complex 
\begin{equation}   \label{e:the complex}
0\to G^{-1}\overset{d}\longrightarrow G^0\to 0,
\end{equation}
so there is no conflict between understanding $\Cone (G^{-1}\overset{d}\longrightarrow G^0)$ as a 2-group and the usual understanding as the complex \eqref{e:the complex}.

\subsection{Answering the question from \S\ref{sss:the question}}
\subsubsection{A crossed module related to the $\sG$-action}
We keep the notation of \S\ref{sss:Quotient 2-groupoids}-\ref{sss:the question}. Let $\pi_1(\sG):=\Aut (1_{\sG})$, where $1_{\sG}$ is the unit object of the 2-group $\sG$; it is well known that $\pi_1(\sG)$ is abelian. The $\sG$-action induces for each $x\in\sX$ a homomorphism 
\begin{equation} \label{e:the central homomorphism}
\phi_x:\pi_1(\sG)\to\Aut_{\sX}x, 
\end{equation}
which is functorial in $x$, i.e., for any $\psi\in\Mor (x,y)$ one has $\psi\circ \phi_x=\phi_y\circ\psi$. Applying this for $y=x$, we see that \emph{$\im \phi_x$ is contained in the center of $\Aut_{\sX}x$.}
So we can regard \eqref{e:the central homomorphism} as a crossed module in which the action of $\Aut_{\sX}x$ on $\pi_1(\sG )$ is trivial. Let $\Cone (\phi_x)$ be the corresponding  2-group (see \S\ref{sss:The 2-group}).

\begin{prop}   \label{p:the answer}
There is a canonical equivalence of 2-groups
\begin{equation}   \label{e:the equivalence}
\Cone (\pi_1(\sG)\overset{\phi_x}\longrightarrow\Aut_{\sX})\iso\Ker (\AAut_{\tilde\sX}x\to\Aut_{\sX'}\bar x).
\end{equation}
\end{prop}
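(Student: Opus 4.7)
The plan is to write down a canonical 2-functor $F\colon \Cone(\phi_x)\to\Ker(\AAut_{\tilde\sX}x\to\Aut_{\sX'}\bar x)$ and verify directly that it is an equivalence of 2-groups. Since every 2-group is equivalent to a strict one (\S\ref{sss:The 2-group}), I would first strictify $\sG$ together with its action on $\sX$, so that $1_\sG\cdot x=x$ on the nose and the unit and associativity constraints of $\sG$ are strict. This step is harmless and eliminates coherence clutter.

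First I would unwind both sides. By \S\ref{sss:The 2-group}, $\Cone(\phi_x)$ has object set $\Aut_\sX x$ with $\Mor(h,h')=\{\gamma\in\pi_1(\sG)\mid \phi_x(\gamma)\cdot h=h'\}$; the tensor product on objects is multiplication in $\Aut_\sX x$, and on morphisms it is addition in $\pi_1(\sG)$ (the crossed-module action being trivial by \S\ref{sss:abelian case}). The kernel on the right, by inspection of \S\ref{sss:Quotient 2-groupoids}, is the full sub-2-groupoid of $\AAut_{\tilde\sX}x$ on pairs $(g,f)$ with $[g]=1$ in $\pi_0(\sG)$, i.e.\ $g\cong 1_\sG$; since $\Aut_{\sX'}\bar x$ has no nontrivial 2-morphisms, all 2-morphisms of $\AAut_{\tilde\sX}x$ between such pairs already lie in the kernel.

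Next I define $F$ by $h\mapsto(1_\sG,h)$, regarding $h\in\Aut_\sX x$ as an iso $x\to 1_\sG x=x$, and sending $\gamma\in\pi_1(\sG)=\Aut_\sG(1_\sG)$ to itself viewed as a 2-morphism of $\sG$. The condition for such a $\gamma$ to define a 2-morphism $(1_\sG,h)\to(1_\sG,h')$ in the kernel is that the induced $\gamma_x\colon 1_\sG x\to 1_\sG x$ equal $h'h^{-1}$; but by the very definition \eqref{e:the central homomorphism} of $\phi_x$ one has $\gamma_x=\phi_x(\gamma)$, so this is exactly the defining relation $\phi_x(\gamma)\cdot h=h'$ of $\Mor_{\Cone(\phi_x)}(h,h')$. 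Strict monoidality of $F$ then follows from \S\ref{sss:Quotient 2-groupoids}(iii): in the strictified setting, composing $(1_\sG,h_1)$ with $(1_\sG,h_2)$ yields $(1_\sG,h_1 h_2)$, matching multiplication in $\Aut_\sX x$.

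It remains to show $F$ is an equivalence. Full faithfulness on 2-morphisms is immediate from the matching descriptions of the morphism sets on the two sides. For essential surjectivity, given $(g,f)$ in the kernel, the hypothesis $[g]=1$ provides an iso $\beta\colon g\iso 1_\sG$ in $\sG$; then $\beta$ itself is a 2-morphism $(g,f)\iso(1_\sG,\beta_x\circ f)=F(\beta_x\circ f)$ in the kernel, so every object of the kernel is isomorphic to one in the image of $F$. The only real subtlety is the bookkeeping with the unit and associativity constraints of $\sG$ in the monoidality check, and this is precisely what the initial strictification removes; canonicality of the equivalence is clear because $F$ itself is defined without any choices, even though producing an explicit inverse does require choosing $\beta$.
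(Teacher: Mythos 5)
Your proposal is correct and follows essentially the same route as the paper: the paper's proof likewise unwinds $\AAut_{\tilde\sX}x$ as pairs $(g,f)$ with the product $(g_1,f_1)\cdot(g_2,f_2)=(g_1g_2,\tilde f_2\circ f_1)$ and defines the equivalence by $f\mapsto(1_\sG,f)$ on objects and the identity of $\pi_1(\sG)$ on morphisms. Your extra steps (strictification, the explicit check that the 2-morphism condition reduces to $\phi_x(\gamma)h=h'$, and essential surjectivity via a chosen $\beta\colon g\iso 1_\sG$) merely spell out what the paper leaves implicit.
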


\begin{proof}
By \S\ref{sss:Quotient 2-groupoids}, objects of $\AAut_{\tilde\sX}x$ are pairs $(g,f)$, where $g\in\sG$ and $f:x\iso gx$. A~morphism $(g,f)\to (g',f')$ is a morphism $g\to g'$ inducing $f'\circ f^{-1}:gx\to g'x$. The product in $\AAut_{\tilde\sX}x$ is
\[
(g_1,f_1)\cdot (g_2,f_2)=(g_1g_2,\tilde f_2\circ f_1),
\]
where $\tilde f_2:g_1x\to g_1g_2x$ comes from $f_2:x\to g_2x$.

The equivalence \eqref{e:the equivalence} takes $f\in\Aut_{\sX}x$ to the pair $(1_\sG ,f)\in\AAut_{\tilde\sX}x$; at the level of morphisms, it comes from
$\id :\pi_1(\sG)\to \pi_1(\sG)$.
\end{proof}

\subsection{Some corollaries}
Let $\tilde\sX^{\le 1}$ be the 1-truncation of $\tilde\sX$, i.e., the 1-groupoid obtained by replacing the groupoids $\MMor_{\tilde\sX} (x_1,x_2)$, $x_i\in\tilde\sX$, by their $\pi_0$'s.
The functor $\tilde\sX\to\sX'$ factors as $\tilde\sX\to\tilde\sX^{\le 1}\to\sX'$. The functor 
\begin{equation} \label{e:the gerbe}
\tilde\sX^{\le 1}\to\sX'
\end{equation}
 is a gerbe.\footnote{A functor between groupoids is said to be a gerbe if each of its fibers has one and only one isomorphism class of objects.}

\begin{cor}   \label{c:Ker&Coker}
For each $x\in\sX$, one has

(i) $\Ker (\Aut_{\tilde\sX^{\le 1}}(x)\to\Aut_{\sX'}(x))=\Coker \phi_x$.

(ii) $\Aut_{\sX}(\id_x)=\Ker f_x$.
\end{cor}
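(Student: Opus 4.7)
The plan is to deduce both parts from Proposition~\ref{p:the answer} by taking $\pi_0$ and $\pi_1$ of the equivalence of $2$-groups
\[
\Cone\bigl(\pi_1(\sG)\xrightarrow{\phi_x}\Aut_{\sX}(x)\bigr)\iso\Ker\bigl(\AAut_{\tilde\sX}x\to\Aut_{\sX'}\bar x\bigr).
\]

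The first step is to record the elementary identities $\pi_0\Cone(d)=\Coker d$ and $\pi_1\Cone(d)=\Ker d$ for any crossed module $d\colon G^{-1}\to G^0$. Both are read off directly from the description in \S\ref{sss:The 2-group}: formula~\eqref{e:Mor(g,g')} shows that $g,g'\in G^0$ are isomorphic in $\Cone(d)$ iff $g'\in d(G^{-1})\cdot g$, while the automorphisms of the unit $1\in G^0$ are $\{\gamma\in G^{-1}:d(\gamma)=1\}=\Ker d$. Specializing to $d=\phi_x$, the left-hand side of the equivalence has $\pi_0=\Coker\phi_x$ and $\pi_1=\Ker\phi_x$.

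The second step is to compute $\pi_0$ and $\pi_1$ of the right-hand side. Because the target $\Aut_{\sX'}\bar x$ of the morphism $\AAut_{\tilde\sX}x\to\Aut_{\sX'}\bar x$ is a discrete $2$-group (i.e.\ has trivial $\pi_1$), the long exact sequence of homotopy groups of its kernel collapses to
\[
\pi_1\Ker\bigl(\AAut_{\tilde\sX}x\to\Aut_{\sX'}\bar x\bigr)=\pi_1\AAut_{\tilde\sX}x,
\]
\[
\pi_0\Ker\bigl(\AAut_{\tilde\sX}x\to\Aut_{\sX'}\bar x\bigr)=\Ker\bigl(\pi_0\AAut_{\tilde\sX}x\to\Aut_{\sX'}\bar x\bigr).
\]
By the definition of the $1$-truncation $\tilde\sX^{\le 1}$, one has $\pi_0\AAut_{\tilde\sX}x=\Aut_{\tilde\sX^{\le 1}}(x)$, while $\pi_1\AAut_{\tilde\sX}x$ is the group of $2$-automorphisms of the identity $1$-morphism $\id_x\colon x\to x$ in $\tilde\sX$. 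Matching $\pi_0$'s across the equivalence yields (i); matching $\pi_1$'s yields (ii) (with ``$f_x$'' read as $\phi_x$).

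I do not expect any real obstacle: after Proposition~\ref{p:the answer} the argument is purely formal. The only point that requires a moment's care is the compatibility of $\pi_0$ and $\pi_1$ with the $2$-group kernel, and this rests solely on the vanishing of $\pi_1$ of the target $\Aut_{\sX'}\bar x$.
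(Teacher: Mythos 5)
Your argument is correct and is exactly the route the paper intends: the corollary is stated without proof as an immediate consequence of Proposition~\ref{p:the answer}, obtained by taking $\pi_0$ and $\pi_1$ of the equivalence \eqref{e:the equivalence}, using $\pi_0\Cone(\phi_x)=\Coker\phi_x$, $\pi_1\Cone(\phi_x)=\Ker\phi_x$, and the discreteness of $\Aut_{\sX'}\bar x$. Your reading of the statement's typos ($\Aut_{\tilde\sX}(\id_x)$ and $\phi_x$ in place of $\Aut_{\sX}(\id_x)$ and $f_x$) is also the right one.
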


\subsubsection{The abelian situation}   \label{sss:abelian situation} 
Suppose that for each $x\in\sX$ the group $\pi_1 (\sX ,x)=\Aut_{\sX}x$ is abelian. Then $\pi_1 (\sX ,x)$ and the homomorphism \eqref{e:the central homomorphism} depend only on $\bar x\in\pi_0(\sX )$, so we can rewrite \eqref{e:the central homomorphism} as
\begin{equation}  \label{e:2the central homomorphism}
\phi_{\bar x}:\pi_1(\sG)\to\pi_1 (\sX ,\bar x), \quad \bar x\in\pi_0(\sX ).
\end{equation}
Let $\fL ({\bar x}):=\Coker\phi_{\bar x}$.

The group $\pi_0(\sG)$ acts on $\pi_1(\sG)$ and $\pi_0(\sX)$, and the collection of homomorphisms \eqref{e:2the central homomorphism} is $\pi_0(\sG)$-equivariant. So the collection of abelian groups $\fL ({\bar x})$, $\bar x\in\pi_0(\sX)$, is $\pi_0(\sG)$-equivariant. In other words, we get a functor
\begin{equation}   \label{e:Lau functor}
\fL: \sX'\to\Ab;
\end{equation}
as before, $\sX'$ denotes the quotient groupoid $\pi_0(\sX)/\pi_0 (\sG )$.

The gerbe \eqref{e:the gerbe} is banded by the functor $\fL$; this follows from Corollary~\ref{c:Ker&Coker}(i).

\section{A particular situation}   \label{s:the situation}
\subsection{Subject of this section}  \label{ss:the situation}
\subsubsection{}   \label{sss:the situation}
If $\fG$ is a 2-group and $\sX$ is the underlying groupoid of $\fG$ then $\fG\times\fG$ acts on $\sX$ by two-sided translations:  namely, $(g,g')\in\fG\times\fG$ acts by $x\mapsto gx(g')^{-1}$.
So given a homomorphism of 2-groups $\fB\to\fG\times\fG$,
we get an action of $\fB$ on $\sX$ and the corresponding 2-groupoid~$\sX/\fB$.

\subsubsection{The goal}  \label{sss:the cone}
Now let $B^{\bullet}, G^{\bullet}$ be crossed modules and $\pi,\pi':B^{\bullet}\to G^{\bullet}$ be homomorphisms. Then we get the homomorphism $(\pi,\pi'):\fB\to\fG\times\fG$, where $\fB:=\Cone (B^{\bullet})$, $\fG:=\Cone (G^{\bullet})$. 
It gives a strict action of the strict 2-group $\fB$ on $\sX$ and therefore a strict 2-groupoid~$\sX/\fB$. We will denote this strict 2-groupoid by 
\begin{equation}  \label{e:the cone}
\Cone (B^{\bullet}\overset{\pi,\pi'}\longrightarrow G^{\bullet}).
\end{equation}
The main goal of \S\ref{s:the situation} is to give a certain tautological reformulation of the construction of the 2-groupoid \eqref{e:the cone}. This will be done in \S\ref{ss:Describing the 2-groupoid}. Let us note two cases in which this reformulation looks nice.

\subsubsection{Two easy cases}  \label{sss:Two easy cases}
(i) Suppose that $B^{\bullet}$ and $G^{\bullet}$ are abelian in the sense of \S\ref{sss:abelian case}. Then
$\Cone (B^{\bullet}\overset{\pi-\pi'}\longrightarrow G^{\bullet})$
is a complex of abelian groups $0\to C^{-2}\overset{d}\longrightarrow C^{-1}\overset{d}\longrightarrow C^0\to 0$, 
and $\Cone (B^{\bullet}\overset{\pi,\pi'}\longrightarrow G^{\bullet})$ is the strict 2-group associated to this complex
in the usual way: its objects are elements of $C^0$, and for $c,c'\in C^0$ the groupoid of morphisms $c\to c'$ is the quotient of the set
$\{x\in C^{-1}\,|\, dx=c'-c\}$ by the action of $C^{-2}$.

(ii) If $B^{-1}=0$ then $\Cone (B^{\bullet}\overset{\pi,\pi'}\longrightarrow G^{\bullet})$ is a 1-groupoid.  To describe it, first note that the maps
\[
B^0\times G^0\to G^0, \quad (b,g)\mapsto \pi (b)g\pi'(b)^{-1},
\]
\[
G^{-1}\times G^0\to G^0, \quad (\gamma,g)\mapsto d(\gamma )g
\]
define actions of the groups $B^0$ and $G^{-1}$ on the set $G^0$. These actions combine into an action of $B^0\ltimes_{\pi}G^{-1}$ on the set $G^0$, where $B^0\ltimes_{\pi}G^{-1}$ is the semidirect product via the homomorphism
$B^0\overset{\pi}\longrightarrow G^0\to\Aut G^{-1}$. It is easy to check that the corresponding quotient groupoid (in the sense of \S\ref{sss:Quotient groupoids}) is $\Cone (B^{\bullet}\overset{\pi,\pi'}\longrightarrow G^{\bullet})$.

\subsubsection{}
In general, it is easy to check that the objects and 1-morphisms of $\Cone (B^{\bullet}\overset{\pi,\pi'}\longrightarrow G^{\bullet})$ are the same as in \S\ref{sss:Two easy cases}(ii).
To describe the 2-morphisms, we use a slight generalization of the notion of crossed module, see \S\ref{ss:X-crossed modules} below.

\subsection{The maps $\phi_g$}   \label{ss:the maps phi_g}
Let us describe the map \eqref{e:the central homomorphism}. In our situation, $\sG =\Cone (B^\bullet )$ and $\sX$ is the underlying groupoid of $\Cone (G^\bullet )$. Note that 
$$\pi_1 (\sG )=H^{-1}(B^\bullet ):=\Ker (B^{-1}\overset{d}\longrightarrow B^0).$$
We have $\Ob\sX=G^0$, and for $g\in G^0$ the group $\Aut_\sX g$ identifies with $H^{-1}(G^\bullet )$ via \eqref{e:Mor(g,g')} (in particular, $\Aut_\sX g$ is abelian, so we are in the situation of \S\ref{sss:abelian situation}).
Thus the homomorphism \eqref{e:the central homomorphism} is a map
\[
\phi_g: H^{-1}(B^\bullet )\to H^{-1}(G^\bullet ).
\]

\begin{lem}  \label{l:the map phi_g}
The homomorphism $\phi_g$ is as follows:
\begin{equation}   \label{e:phi_g}
\phi_g (\beta)=\pi (\beta)\cdot {}^{g}\pi'(\beta)^{-1}={}^{g}\pi'(\beta)^{-1}\cdot \pi (\beta), \quad \beta\in H^{-1}(B^\bullet ).
\end{equation}
\end{lem}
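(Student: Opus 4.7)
The plan is a direct unfolding of the definitions: $\phi_g$ sends a 2-automorphism $\beta:1_\sG\to 1_\sG$ (which is just an element of $\pi_1(\sG)=H^{-1}(B^\bullet)=\Ker(d:B^{-1}\to B^0)$) to the image of $\beta$ under the action functor $\sG\times \sX\to\sX$, evaluated at the object $g\in G^0=\Ob\sX$. Since the action of $\fB=\sG$ on $\sX$ is obtained by composing the homomorphism $(\pi,\pi'):\fB\to\fG\times\fG$ with the two-sided translation action $(a,a',x)\mapsto a\cdot x\cdot (a')^{-1}$, and since the image of $\beta$ under $(\pi,\pi')$ is the pair $(\pi(\beta),\pi'(\beta))$ of 2-automorphisms of $1_\fG$, the value $\phi_g(\beta)$ is the horizontal composition
\[
\pi(\beta)\;\star\;\id_g\;\star\;\pi'(\beta)^{-1}
\]
computed inside the strict 2-group $\fG=\Cone(G^\bullet)$.

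The first step is therefore to compute this horizontal composition using formula~\eqref{e:tensor product}. Applying it to the pair $(\pi(\beta),\id_g)$, viewed as 2-morphisms over the objects $(1,g)$, gives $\pi(\beta)\cdot{}^1\! 1=\pi(\beta)$, now regarded as a 2-morphism $g=1\cdot g\to 1\cdot g=g$. Applying~\eqref{e:tensor product} again to this 2-morphism (over the object $g$) together with $\pi'(\beta)^{-1}$ (over the object $1$) yields
\[
\phi_g(\beta)=\pi(\beta)\cdot{}^g\pi'(\beta)^{-1},
\]
which is the first equality in \eqref{e:phi_g}. As a sanity check one verifies that $d$ of this expression is trivial, using $d(\pi(\beta))=\pi(d\beta)=1$ and $d({}^g\pi'(\beta)^{-1})=g\cdot d(\pi'(\beta))^{-1}\cdot g^{-1}=1$, so the result indeed lies in $H^{-1}(G^\bullet)=\Aut_\sX g$.

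For the second equality one observes that any element $\gamma\in H^{-1}(G^\bullet)=\Ker(d:G^{-1}\to G^0)$ is central in the whole group $G^{-1}$: setting $d(\gamma)=1$ in the crossed-module identity \eqref{e:crossed module2} gives $\gamma'={}^{1}\gamma'=\gamma\gamma'\gamma^{-1}$ for every $\gamma'\in G^{-1}$. Since $\pi(\beta)$ lies in $H^{-1}(G^\bullet)$, it commutes with ${}^g\pi'(\beta)^{-1}$, and this yields the second form of \eqref{e:phi_g}. The only technical point is to correctly track the positions of the base objects in the repeated application of the horizontal-composition formula \eqref{e:tensor product} and the treatment of the inverse factor $\pi'(\beta)^{-1}$; once this bookkeeping is done, the lemma is immediate.
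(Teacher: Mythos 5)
Your proof is correct and is essentially the paper's own argument: both unwind $\phi_g(\beta)$ as the horizontal composition $\pi(\beta)\star\id_g\star\pi'(\beta)^{-1}$ and evaluate it via formula \eqref{e:tensor product} with base objects $1,g,1$ (the paper in one ternary step, you in two binary steps), yielding $\pi(\beta)\cdot {}^{g}\pi'(\beta)^{-1}$. Your explicit check of the second equality via centrality of $\Ker d$ in $G^{-1}$ is a detail the paper leaves implicit, but the route is the same.
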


\begin{proof}
By \eqref{e:tensor product}, the map
\[
\Mor (g_1,g'_1)\times \Mor (g_2,g'_2)\times \Mor (g_3,g'_3)\to \Mor (g_1g_2g_3,g'_1g'_2g'_3)
\]
is given by $(\gamma_1,\gamma_2,\gamma_3)\mapsto \gamma_1\cdot {}^{g_1}\gamma_2\cdot {}^{g_1g_2}\gamma_3$. To get $\phi_g (\beta)$, one has to take
$g_1=g_3=1$, $g_2=g$, $\gamma_1=\pi(\beta )$, $\gamma_2=1$, $\gamma_3=\pi'(\beta)^{-1}$.
\end{proof}

\subsubsection{Remarks}  \label{sss:relation to Lau}
(i) Lemma~\ref{l:the map phi_g} implies that in our situation the functor $\fL$ from \S\ref{sss:abelian situation} is $g\mapsto\Coker\phi_g$, where
$\phi_g$ is given by \eqref{e:phi_g}.

(ii) The author hopes that the functor $\fL$ from the previous remark can serve as an abstract model of the Lau group scheme (in the sense of \cite[Thm.~1.1.1]{D}) and that 
the canonical gerbe banded by~$\fL$ (see \eqref{e:the gerbe} and \S\ref{sss:abelian situation}) can serve as an abstract model of the Lau gerbe (by which we mean the gerbe from \cite[Thm.~1.1.1]{D}).

\subsection{A way to describe strict 2-groupoids}   \label{ss:X-crossed modules}
A strict 2-groupoid with a single object (a.k.a. a strict 2-group) is ``the same as'' a crossed module, see \S\ref{sss:crossed modules}-\ref{sss:The 2-group}.
Arbitrary strict 2-groupoids have a similar description via a slight generalization of the notion of crossed module.

\subsubsection{Definition}  \label{sss:X-crossed module}
Let $X$ be a set. An \emph{$X$-crossed module} is the following data:

(i) a groupoid $\Gamma$ with $\Ob\Gamma=X$; 

(ii) a functor
\[
\Gamma\mapsto\{\mbox{Groups}\}, \quad x\mapsto H_x ;
\]

(iii) a collection of homomorphisms
\[
d_x: H_x\to\Aut_\Gamma x
\]
such that $d_x$ is functorial in $x$ and
\[
{}^{d_x(h)} h' = hh 'h ^{-1}\quad   \mbox{ for all } x\in X  \mbox{ and }h ,h '\in H_x,
\]
where ${}^{d_x(h)} h'$ stands for the image of $h'$ under the automorphism of $H_x$ corresponding to $d_x(h)\in \Aut_\Gamma x$ by functoriality of $H_x$.

If $X$ has a single element one gets the usual notion of crossed module, see \S\ref{sss:crossed modules}.

\subsubsection{From strict 2-groupoids to $X$-crossed modules}
Let $\cC$ be a strict 2-groupoid and $X=\Ob\cC$. Then one defines an $X$-crossed module as follows:

(i) $\Gamma$ is the 1-skeleton of $\cC$ (i.e., the 1-groupoid obtained by disregarding the 2-morphisms of $\cC$);

(ii) for $x\in X$ one sets $H_x:=\Ker (\Aut_\Gamma x\epi \AAut_{\cC} x)$, where $\Ker$ stands for the categorical fiber over $\id_x\in\AAut_{\cC} x$; so $H_x$ is formed by pairs $(g,f)$, where $g\in\Aut_{\Gamma}x$ and $f:g\iso\id_{\cC}x$ is a 2-morphism in $\cC$;

(iii) the map $d_x: H_x\to\Aut_\Gamma x$ forgets $f$.

\medskip

We have defined a functor from the 1-category of strict 2-groupoids to the category of pairs consisting of a set $X$ and an $X$-crossed module. This functor is an equivalence, and the inverse functor is described below.

\subsubsection{From $X$-crossed modules to strict 2-groupoids}   \label{sss:From X-crossed modules to 2-groupoids}
In the situation of \S\ref{sss:X-crossed module} we have to define the groupoids
$\MMor_{\cC} (x,x')$ for $x,x'\in X$ and the composition functors 
\begin{equation}  \label{e:2tensor product}
 \MMor (x',x'') \times\MMor (x,x')\to\MMor (x,x'').
\end{equation}
for $x,x',x''\in X$. 

$\MMor_{\cC} (x,x')$ is defined to be the groupoid quotient  of the set $\Mor_\Gamma (x,x')$ with respect to the action of $H_{x'}$ that comes from the homomorphism $d_{x'}: H_x\to\Aut_\Gamma x'$.
Thus a morphism in $\MMor (x,x')$ is a triple $$\alpha =(g,\tilde g,h),$$ where $g,\tilde g\in\Mor (x,x')$ are the source and target of $\alpha$, and $h\in H_{x'}$ is such that $\tilde g=d_{x'}(h)\cdot g$.

At the level of objects, the functor \eqref{e:2tensor product} is just the composition map in the groupoid $\Gamma$. Let us define \eqref{e:2tensor product} at the level of morphisms.
Let $\alpha_1$ (resp.~$\alpha_2$) be a morphism in $\MMor (x',x'')$ (resp.~$\MMor (x,x')$). As above, write $\alpha_i$ as a triple $(g_i,\tilde g_i,h_i)$. The image of $(\alpha_1,\alpha_2)$ under \eqref{e:2tensor product} is defined to be the triple
\[
(g_1g_2,\,\tilde g_1\tilde g_2,\, h_1\cdot {}^{g_1} h_2)
\]
similarly to formula \eqref{e:tensor product}.

\subsection{Describing the 2-groupoid \eqref{e:the cone}} \label{ss:Describing the 2-groupoid}
Let $X$ be the underlying set of $G^0$. Let us describe the $X$-crossed module corresponding to the strict 2-groupoid 
$\Cone (B^{\bullet}\overset{\pi,\pi'}\longrightarrow G^{\bullet})$ (the 2-groupoid itself can be recovered from the $X$-crossed module as explained in \S\ref{sss:From X-crossed modules to 2-groupoids}).

The 1-groupoid $\Gamma$ is the one corresponding to the action of the group $B^0\ltimes_{\pi}G^{-1}$ on $X$ described in \S\ref{sss:Two easy cases}(ii). It remains for us to describe the data from
\S\ref{sss:X-crossed module}(ii-iii), i.e., the ($B^0\ltimes_{\pi}G^{-1}$)-equi\-variant family of groups $H_g$, $g\in G^0$, and the homomorphisms 
$$d_g:H_g\to\Stab_g, \quad g\in G^0,$$
where $\Stab_g\subset B^0\ltimes_{\pi}G^{-1}$ is the stabilizer of $g$, i.e.,
\begin{equation}   \label{e:Stab_g}
\Stab_g=\{ b\cdot\gamma\,|\,b\in B^0,\, \gamma\in G^{-1}, \, d(\gamma )=\pi (b)^{-1}\cdot g\pi'(b)g^{-1}\}.
\end{equation}

 It is easy to check that these data are as follows:

(i) $H_g=B^{-1}$, and the $B^0\ltimes_{\pi}G^{-1}$-equivariant structure comes from the action of $B^0$ on~$B^{-1}$;

(ii) in terms of \eqref{e:Stab_g}, the map $d_g:B^{-1}\to \Stab_g$ is given by $b=d(\beta )$, $\gamma =\pi (\beta)^{-1}\cdot {}^{g}\pi'(\beta)$, where $\beta\in B^{-1}$; so
\begin{equation}   \label{e:delta_g}
d_g (\beta)=d(\beta)\cdot \pi (\beta)^{-1}\cdot {}^{g}\pi'(\beta), \quad \beta\in B^{-1}.
\end{equation}

\subsubsection{Remark}
It is easy to check that the map 
$$d_g:B^{-1}\to B^0\ltimes_{\pi}G^{-1}, \quad \beta\mapsto d(\beta)\cdot \pi (\beta)^{-1}$$
is a homomorphism whose image centralizes $G^{-1}$. So the map \eqref{e:delta_g} is a homomorphism. Moreover,
one could rewrite \eqref{e:delta_g} as $d_g (\beta)={}^{g}\pi'(\beta)\cdot  d(\beta)\cdot\pi (\beta)^{-1}$.


\appendix

\section{Relation to the 2-stack $\BT_n^{G,\mu,?}$}  \label{s:Relation to BT}
\subsection{Goal of this Appendix}
Let $\BB^{\bullet},\BG^{\bullet}$ be crossed modules in some topos. Suppose we are given homomorphisms $\pi,\pi':\BB^{\bullet}\to\BG^{\bullet}$.  Then one defines a 2-stack
\begin{equation}  \label{e:2the cone}
\Cone (\BB^{\bullet}\overset{\pi,\pi'}\longrightarrow \BG^{\bullet})
\end{equation}
similarly to \S\ref{sss:the cone}; if the topos is a point then \eqref{e:2the cone} is the 2-groupoid~\eqref{e:the cone}.

Under a very mild assumption\footnote{See \S\ref{sss:To be explained} below.}, the 2-stack $\BT_n^{G,\mu,?}$ from \cite[\S D.8.3]{D} can be writtten in the form \eqref{e:2the cone} in a rather natural way. The goal of this Appendix is to provide some details about this in a somewhat informal way.

\subsection{The topos}
Throughout this Appendix, we fix a prime $p$.
A ring $R$ is said to be $p$-nilpotent if the element $p\in R$ is nilpotent. Let $\pNilp$ denote the category of $p$-nilpotent rings.
The topos relevant for us is the category of fpqc-sheaves of sets on $\pNilp^{\op}$. From now on, the word ``stack'' will refer to this topos.

\subsection{The stack $\BT_n^{G,\mu}$}
\subsubsection{The results of \cite{GM}}
Let $n\in\BN$.
Let $G$ be a smooth affine group scheme over $\BZ/p^n\BZ$ and $$\mu :\BG_m\to G$$ a cocharacter.


For $R\in \pNilp$ let $\BT_n^{G,\mu}(R)$ be as in \cite[\S 9]{GM}; this is the $\infty$-groupoid\footnote{If $R$ is good enough (e.g., l.c.i) then the derived stack $R^{\Syn}\otimes (\BZ/p^n\BZ)$ is classical, so $\BT_n^{G,\mu}(R)$ is a 1-groupoid.} 
of $G$-bundles on $R^{\Syn}\otimes (\BZ/p^n\BZ)$ satisfying a certain condition, which depends on $\mu$. Here $R^{\Syn}$ is the syntomification of $R$.

$\mu$ is said to be \emph{$1$-bounded} if all weights of the action of $\BG_m$ on $\Lie (G)$ are $\le 1$ (if $G$ is reductive and almost simple this means that $\mu$ is minuscule or zero).
By \cite[Thm.~D]{GM}, if $\mu$ is $1$-bounded then $\BT_n^{G,\mu}$ is a smooth algebraic stack over $\Spf\BZ_p$; in other words, for every $m\in\BN$ the restriction of $\BT_n^{G,\mu}$ to the category of $\BZ/p^m\BZ$-algebras is a smooth algebraic 1-stack over $\BZ/p^m\BZ$.
By \cite[Thm.~A]{GM}, if $G=GL(d)$ and $\mu$ is $1$-bounded then $\BT_n^{G,\mu}$ identifies with the stack of $n$-truncated Barsotti-Tate groups of height $d$ and dimension $d'$, where $d'$ depends on $\mu$.

\subsubsection{The 2-stack $\BT_n^{G,\mu,?}$ and the conjecture}
\S D.8.3 of \cite{D} contains a definition of a certain 2-stack $\BT_n^{G,\mu,?}$; in this definition $\mu$ is not required to be $1$-bounded. Conjecture~D.8.4 from \cite{D} says that if $\mu$ is $1$-bounded then $\BT_n^{G,\mu}=\BT_n^{G,\mu,?}$ (which implies that $\BT_n^{G,\mu,?}$ is a 1-stack if $\mu$ is $1$-bounded).

\subsubsection{To be explained below}    \label{sss:To be explained}
Assume that $G$ is lifted to a smooth affine group scheme over~$\BZ_p$ (note that such a lift is automatic if $G$ is reductive). Our goal is to explain why
$\BT_n^{G,\mu,?}$ can be rather naturally written in the form \eqref{e:2the cone}, where $\BB^i$ and $\BG^i$ are explicit group ind-schemes over $\BZ_p$.


\subsection{Format of the definition of $\BT_n^{G,\mu,?}$}
\subsubsection{The story in a few words}
According to the definition from \cite[\S D.8.3]{D}, the 2-stack $\BT_n^{G,\mu,?}$ is obtained by applying the construction of \S\ref{sss:the situation} to a certain homomorphism of group stacks 
\begin{equation}   \label{e:morphism of group stacks}
\fB\to\fG\times\fG,
\end{equation}
see formula~(D.10) of \cite{D}.
It turns out that this homomorphism lifts in a rather natural way to a homomorphism 
\[
\BB^{\bullet}\overset{(\pi,\pi')}\longrightarrow\BG^{\bullet}\times\BG^{\bullet}
\]
of fpqc sheaves of crossed modules. Such a lift provides a realization of $\BT_n^{G,\mu,?}$ in the form~\eqref{e:2the cone}.

\subsubsection{What will be explained}
Instead of discussing diagram \eqref{e:morphism of group stacks}, we will only discuss $\fG$: we will recall the definition of the 2-stack $\fG$ from \cite{D} and explain how to lift it to a sheaf of crossed modules $\BG^{\bullet}$.
Let us note that $\fG$ does not depend on the cocharacter $\mu$ (but $\fB$ does).

\subsubsection{The 2-stack $\fG$}
By definition, 
\begin{equation}  \label{e:who is fG}
\fG :=G(\SR_n),
\end{equation}
where $\SR_n$ is a certain stack of $\BZ/p^n\BZ$-algebras, whose definition is sketched\footnote{The stack $\SR_{n,\BF_p}:=\SR_n\times\Spec\BF_p$ is \emph{completely described} in \cite[\S D.7.2]{D}.} in \cite[\S D.7.1]{D}.
Formula~\eqref{e:who is fG} just means that 
\begin{equation}  \label{e:2who is fG}
\fG (A):=G (\SR_n (A)),   \quad A\in\pNilp .
\end{equation}
The r.h.s of \eqref{e:2who is fG} makes sense because $G$ is a scheme over $\BZ/p^n\BZ$ and $\SR_n (A)$ is an animated $\BZ/p^n\BZ$-algebra.

\subsection{Constructing the crossed module $\BG^{\bullet}$}
\subsubsection{Recollections}   \label{sss:Recollections}
In addition to \S\ref{sss:crossed modules} and the interpretation via strict 2-groups at the end of \S\ref{sss:The 2-group}, there are two other well known points of view on crossed modules:

(i) a crossed module is the same as a 2-group $\fG$ with an epimorphism $G^0\epi\fG$, where $G^0$ is a group; 

(ii) a crossed module is the same as a groupoid\footnote{The nerve of this groupoid is the \v {C}ech nerve of the epimorphism $G^0\epi\fG$.} internal to the category of groups.

\subsubsection{Strategy for constructing $\BG^{\bullet}$}
Suppose that $G$ is lifted to a smooth affine group scheme $\tilde G$ over~$\BZ_p$. Suppose that we have a ring scheme $\cA$ over $\Spf\BZ_p$ equipped with an epimorphism
\begin{equation}  \label{e:cA to hat sR_n}
\cA\epi\SR_n.
\end{equation}
Set $\BG^0:=\tilde G (\cA )$.
Then we get a homomorphism 
\begin{equation}  \label{e:BG^0}
\BG^0=\tilde G (\cA )\to \tilde G(\SR_n )=G(\SR_n )=\fG .
\end{equation}
Using smoothness of $\tilde G$, one checks that it is surjective (in the sense of fpqc sheaves).
So one gets a crossed module $\BG^{\bullet}$ by applying \S\ref{sss:Recollections}(i) to the homomorphism \eqref{e:BG^0}.
One has $\BG^{-1}=\Ker (\BG^0\epi\fG )$.

\subsubsection{A nice crossed module}
There exists an epimorphism \eqref{e:cA to hat sR_n} with $\cA=W_n$, where $W_n$ is the ring scheme\footnote{$W_n$ is \emph{not} a scheme of $\BZ/p^n\BZ$-algebras (this is why we need $\tilde G$). On the other hand, $W_{n,\BF_p}$ \emph{is} a scheme of $\BZ/p^n\BZ$-algebras.} of $n$-truncated $p$-typical Witt vectors.
Moreover, if $p>2$ then there is a very nice epimorphism 
\begin{equation}   \label{e:nice epimorphism}
W_n\epi\SR_n
\end{equation}
 whose kernel equals $\hat W^{(F^n)}:=\Ker (F^n:\hat W\to\hat W)$. Here $\hat W\subset W$ is the following ind-scheme: for any $A\in\pNilp$, the ideal
$\hat W(A)$ is the set of all $x\in W(A)$ such that all components of the Witt vector $x$ are nilpotent and all but finitely many of them are zero.

The crossed module $\BG^{\bullet}$ corresponding to \eqref{e:nice epimorphism} is very simple: $\BG^0=\tilde G (W_n)$, $\BG^{-1}=\tilde G (\hat W^{(F^n)})$, $d:\BG^{-1}\to\BG^0$ comes from the canonical map
$\hat W^{(F^n)}\to W_n$, and the action of $\BG^0$ on $\BG^{-1}$ comes from the equality $\BG^{-1}=\Ker (\tilde G(W_n\ltimes\hat W^{(F^n)} )\epi \tilde G(W_n))$, where $W_n\ltimes\hat W^{(F^n)}$ is the semidirect product.


\bibliographystyle{alpha}

\begin{thebibliography}{BFM}
\bibitem[D]{D}
V.~Drinfeld, \emph{On the Lau group scheme,} e-print arxiv:2307.06194, version 7.

\bibitem[GM]{GM}
Z.~Gardner and K.~Madapusi, \emph{An algebraicity conjecture of Drinfeld and the moduli of $p$-divisible groups}, arXiv:2201.06124, version 3. 


\bibitem[L]{L} J.-L. Loday, \emph{Spaces with finitely many nontrivial homotopy groups,}
J. Pure Appl. Algebra {\bf 24} (1982), no.~2, 179--202.

\bibitem[SGA4]{SGA4} M.~Artin, A.~Grothendieck and J.-L.~Verdier, SGA 4: Th\'eorie des Topos et Cohomologie \'Etale des Sch\'emas, tome 3,  Lecture Notes in Math. \textbf{305}, Springer-Verlag, 1973.

\end{thebibliography}

\end{document}